\definecolor{vio}{rgb}{0.54, 0.17, 0.89}
\newtheorem{theorem}{Theorem}[section]
\newtheorem{lemma}[theorem]{Lemma}
\newtheorem{question}[theorem]{Question}
\newtheorem{corollary}[theorem]{Corollary}
\numberwithin{equation}{section}
\theoremstyle{remark}
\newtheorem*{remark}{Remark}
\DeclareMathOperator{\li}{\mathrm{li}}
\def\reals{\hbox{\rm I\kern-.18em R}}
\def\complexes{\hbox{\rm C\kern-.43em
\vrule depth 0ex height 1.4ex width .05em\kern.41em}}
\def\field{\hbox{\rm I\kern-.18em F}} 
\let\svthefootnote\thefootnote
\newcommand\freefootnote[1]{%
  \let\thefootnote\relax%
  \footnotetext{#1}%
  \let\thefootnote\svthefootnote%
}
\newenvironment{section*}[2][A]{
  \section*{#2}
  \renewcommand\thesection{#1}
  \setcounter{theorem}{0}}{}
\begin{document}

\title[The optimality of the error term in the prime number theorem]{Zero-density estimates and the optimality of the error term in the prime number theorem}

\author{Daniel R. Johnston}
\address{School of Science, UNSW Canberra, Australia}
\email{daniel.johnston@unsw.edu.au}
\date\today
\keywords{}

\begin{abstract}
    We demonstrate the impact of a generic zero-free region and zero-density estimate on the error term in the prime number theorem. Consequently, we are able to improve upon previous work of Pintz and provide an essentially optimal error term for some choices of the zero-free region. As an example, we show that if there are no zeros $\rho=\beta+it$ of $\zeta(s)$ with
    \begin{equation*}
        1-\beta<\frac{1}{c(\log t)^{2/3}(\log\log t)^{1/3}}=:\eta(t),
    \end{equation*}
    then
    \begin{equation*}
        \frac{|\psi(x)-x|}{x}\ll\exp(-\omega(x))\frac{(\log x)^9}{(\log\log x)^3},
    \end{equation*}
    where $\psi(x)$ is the Chebyshev prime-counting function, and
    \begin{equation*}
        \omega(x)=\min_{t\geq 3}\{\eta(t)\log x+\log t\}.
    \end{equation*}
    This refines the best known error term for the prime number theorem, previously given by 
    \begin{equation*}
        \frac{|\psi(x)-x|}{x}\ll_{\varepsilon}\exp(-(1-\varepsilon)\omega(x))
    \end{equation*} 
    for any $\varepsilon>0$.
\end{abstract}

\maketitle

\freefootnote{\textit{Affiliation}: School of Science, The University of New South Wales Canberra, Australia.}
\freefootnote{\textit{Key phrases}: zero-free region, zero-density estimates, prime number theorem}
\freefootnote{\textit{2020 Mathematics Subject Classification}: 11M26, 11N05 (Primary) 11N56 (Secondary)}

\section{Introduction}
A central problem in analytic number theory is to obtain good estimates for the prime counting functions
\begin{equation*}
    \pi(x)=\sum_{p\leq x}1,\quad \theta(x)=\sum_{p\leq x}\log p\quad\text{and}\quad \psi(x)=\sum_{\substack{p^k\leq x\\ k\geq 1}}\log p,
\end{equation*}
where each index $p$ is prime. The prime number theorem asserts that
\begin{equation*}
    \pi(x)\sim\li(x):=\int_2^x\frac{\mathrm{d}t}{\log t},\quad\theta(x)\sim x\quad\text{and}\quad\psi(x)\sim x.
\end{equation*}
Therefore, one typically seeks strong bounds on the (scaled) error terms
\begin{equation}\label{deltadef}
    \Delta_{1}(x)=\frac{|\pi(x)-\li(x)|}{x/\log x},\quad \Delta_{2}(x)=\frac{|\theta(x)-x|}{x}\quad\text{and}\quad\Delta_{3}(x)=\frac{|\psi(x)-x|}{x}
\end{equation}
noting that $\li(x)\sim x/\log x$. Assuming the Riemann hypothesis, Schoenfeld \cite{schoenfeld1976sharper} proved the bound
\begin{equation*}
    \Delta_{i}(x)<\frac{\log^2x}{8\pi\sqrt{x}},\quad x\geq 2657
\end{equation*}
for all $i\in\{1,2,3\}$. Without the Riemann hypothesis however, one is left with much weaker bounds for $\Delta_{i}(x)$. In this case, to optimise bounds on $\Delta_{i}(x)$ one needs to extract as much information as possible about the zeros of the Riemann-zeta function $\zeta(s)$. This is typically done through \emph{zero-free regions} and \emph{zero-density estimates}.

A zero-free region is equivalent to the existence of a continuous decreasing function $\eta(t)$ with $0<\eta(t)\leq 1/2$ such that $\zeta(s)$ has no zeros $\rho=\beta+it$ with
\begin{equation*}
    \beta>1-\eta(|t|).
\end{equation*}
Here, the general goal is to find the largest such function $\eta(t)$. Typical forms of $\eta(t)$ include
\begin{equation}\label{zfregions}
    \eta_1(t)=\frac{1}{R\log t}\quad\text{and}\quad\eta_2(t)=\frac{1}{c(\log t)^{2/3}(\log\log t)^{1/3}}    
\end{equation}
for sufficiently large $t$ and constants $R>0$ and $c>0$. Functions of the form $\eta_1(t)$ and $\eta_2(t)$ are respectively called \emph{classical} and \emph{Vinogradov-Korobov} zero-free regions, with the latter named after the work of Vinogradov \cite{vinogradov1958new} and Korobov \cite{korobov1958estimates}. Certainly, zero-free regions of the form $\eta_2(t)$ are asymptotically larger than those of the form $\eta_1(t)$. However in practice, the value of $R$ is much lower than $c$ in \eqref{zfregions} so that $\eta_1(t)$ is superior to $\eta_2(t)$ for small to modest values of $t$ (see \cite{mossinghoff2024explicit}). Moreover, the machinery required to prove a classical zero-free region $\eta_1(t)$ is simpler, meaning an analogous region is also likelier to hold for $L$-functions more general than $\zeta(s)$ (see e.g.\ \cite[Theorem 1.9]{weiss1983least}).

A zero-density estimate is then an upper bound on the quantity
\begin{equation}\label{zerodendef}
    N(\sigma,t):=\#\{\rho=\beta+i\tau:\zeta(\rho)=0,\:\sigma<\beta<1,\: 0<\tau<t\},
\end{equation}
with this bound preferably as small as possible. Often, one can obtain zero-density estimates of the form
\begin{equation}
    N(\sigma,t)\ll t^{A(1-\sigma)^B}(\log t)^C,
\end{equation}
for constants $A>0$, $B\geq 1$, $C\geq 0$, uniformly over a range $\sigma\in[\sigma_0,1]$ with $\sigma_0\in(1/2,1)$. For some examples, see \cite[Chapter 11]{ivic2003riemann}.

In his 1932 book, Ingham \cite{ingham1932distribution} demonstrated a link between zero-free regions and the error term in the prime number theorem. For such a result, one defines
\begin{equation}
    \omega(x):=\min_{t\geq 1}\{\eta(t)\log x+\log t\},
\end{equation}
for a choice of zero-free region $\eta(t)$. Then, assuming some natural conditions on $\eta(t)$, Ingham proved that \cite[Theorem 22]{ingham1932distribution}
\begin{equation}\label{inghameq}
    \Delta_i(x)\ll_{\varepsilon} \exp\left(-\left(\frac{1}{2}-\varepsilon\right)\omega(x)\right)
\end{equation}
for any $\varepsilon>0$ and $i\in\{1,2,3\}$. In 1980, this was improved upon by Pintz \cite[Theorem 1]{pintz1980remainder} who utilised a simple zero-density estimate of Carlson \cite{carlson1920nullstellen} to prove
\begin{equation}\label{Pintzeq}
    \Delta_i(x)\ll_{\varepsilon} \exp(-(1-\varepsilon)\omega(x)).
\end{equation}
In many applications, Pintz's refinement \eqref{Pintzeq} yields far better results than Ingham's \eqref{inghameq}. Consequently, recent explicit estimates for the error terms $\Delta_i(x)$, have used Pintz's method or a variant thereof \cite{platt2021error,broadbent2021sharper,johnston2023some,fiori2023sharpera,fiori2023sharperb}. 

Notably, Pintz was also able to prove \cite[Theorem 2]{pintz1980remainder} that if there exists infinitely many zeros $\rho=\beta+it$ of $\zeta(s)$ such that 
\begin{equation*}
    \beta\geq 1-\eta(|t|),
\end{equation*}
then,
\begin{equation*}
    \Delta_i(x)=\Omega_{\pm}\bigg(\exp(-(1+\varepsilon)\omega(x))\bigg).
\end{equation*}
This shows that Pintz's bound \eqref{Pintzeq} has very little room for improvement. However, we can still ask the following natural question.
\begin{question}\label{mainquest}
    If one has a zero-free region no larger than $\eta(t)$, is it true that
    \begin{equation*}
        \Delta_i(x)\ll\exp(-\omega(x))?
    \end{equation*}
\end{question}
In particular, is it possible to remove the $\varepsilon$ from \eqref{Pintzeq} and thereby, from a zero-free region, give an essentially optimal error term in the prime number theorem? In this paper we will explore this question and in some cases, provide an affirmative answer. Our general approach will be to refine Pintz's method whilst also employing a more general zero-density estimate.

\section{Statement of results}
Our main theorem, which we prove in Section \ref{mainsect}, is as follows. The result demonstrates how a (mostly) arbitrary zero-free region and zero-density estimate affects the error term in the prime number theorem.
\begin{theorem}\label{mainthm}
    Let $\eta(t)$ be a decreasing function for $t\geq 0$ with a continuous derivative $\eta'(t)$ such that $0<\eta(t)\leq 1/2$ and $\zeta(s)$ has no zeros $\rho=\beta+it$ with
    \begin{equation}\label{zerofreeassum}
        \beta>1-\eta(|t|).
    \end{equation}
    Also assume that $N(\sigma,t)$, defined in \eqref{zerodendef}, satisfies a bound
    \begin{equation}\label{zerodenassum}
        N(\sigma,t)\ll t^{A(1-\sigma)^B}(\log t)^C
    \end{equation}
    for some $A>0$, $B\geq 1$, $C\geq 0$, uniformly over a range $\sigma\in[\sigma_0,1]$ with $\sigma_0\in(1/2,1)$. In addition, suppose that for sufficiently large $x$, the function
    \begin{equation}\label{fdef}
        f_x(t):=\eta(t)\log x+\log t
    \end{equation}
    is minimised at a unique value $t=t_0$. Then if 
    \begin{equation}\label{omegadef}
        \omega(x):=f_x(t_0)
    \end{equation}
    satisfies 
    \begin{equation}\label{omegacons}
        \omega(x)\geq 2\log\log x\quad\text{and}\quad \omega(x)=o(\log x),    
    \end{equation}
    one has
    \begin{equation}\label{mainbound}
        \Delta_{i}(x)\ll\exp(-\omega(x))\exp\left(2A\omega(x)\left(\frac{\omega(x)}{\log x}\right)^B\right)\omega(x)^C
    \end{equation}
    for all $i\in\{1,2,3\}$ as $x\to\infty$. Here, the implied constant depends on the choice of zero-free region $\eta(t)$, and zero-density estimate \eqref{zerodenassum}. 
\end{theorem}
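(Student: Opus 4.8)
The plan is to start from the truncated Riemann--von Mangoldt explicit formula: for a truncation height $T$ to be chosen,
\[
\psi(x)-x=-\sum_{|\gamma|\le T}\frac{x^\rho}{\rho}+O\!\left(\frac{x\log^2(xT)}{T}+\log x\right),
\]
where the sum runs over nontrivial zeros $\rho=\beta+i\gamma$. Taking absolute values and pairing conjugate zeros reduces everything to estimating the scaled zero sum
\[
S(T):=\sum_{0<\gamma\le T}\frac{x^{\beta-1}}{|\rho|},
\]
since then $\Delta_i(x)\ll S(T)+\log^2(xT)/T$ for each $i$. Taking $T$ a fixed power of $x$ (say $T=x^2$) makes the explicit-formula error $\ll x^{-1}\log^2 x$, which under the hypothesis $\omega(x)=o(\log x)$ is negligible against the target $\exp(-\omega(x))\gg x^{-o(1)}$; the condition $\omega(x)\ge 2\log\log x$ is precisely what keeps such polynomial-in-$\log x$ losses below the main term.

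The heart of the argument is to estimate $S(T)$ by combining the two hypotheses. I would dyadically decompose $0<\gamma\le T$ into blocks $(t,2t]$ and, on each block, perform partial summation in the real part $\beta$ against the density $N(\sigma,\cdot)$. Writing $x^{\beta-1}=x^{-(1-\beta)}$ and integrating by parts, the zero-free region \eqref{zerofreeassum} caps the $\sigma$-integral at $\sigma=1-\eta(t)$, while \eqref{zerodenassum} bounds $N(\sigma,2t)\ll t^{A(1-\sigma)^B}(\log t)^C$. After the substitution $w=1-\sigma$ the relevant integrand is $\exp\!\big(Aw^B\log t-w\log x\big)$ on $w\in[\eta(t),\tfrac12]$; since the linear term dominates, this is maximized at the left endpoint $w=\eta(t)$, so the block contributes
\[
\frac{1}{t}\,(\log t)^C\exp\!\big(A\eta(t)^B\log t-\eta(t)\log x\big)
\]
up to a bounded factor (the near-critical-line zeros contribute only $O(x^{-1/2}\log^2 T)$ in total and are harmless). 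Setting $t=e^{s}$ rewrites this block contribution as $s^{C}\exp\!\big(-f_x(e^{s})+A\eta(e^{s})^{B}s\big)$ with $f_x$ as in \eqref{fdef}.

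Summing over blocks, the factor $\exp(-f_x)$ concentrates the mass at the unique minimizer $t=t_0$, where $f_x(t_0)=\omega(x)$; this is exactly where the uniqueness hypothesis on the minimum is used, since it lets me extract a clean leading term $\exp(-\omega(x))$ rather than a sum of comparable peaks. The residual factor is controlled using the two elementary inequalities $\log t_0\le\omega(x)$ and $\eta(t_0)\log x\le\omega(x)$, both immediate from $\omega(x)=\eta(t_0)\log x+\log t_0$; these give $(\log t_0)^C\le\omega(x)^C$ and $A\eta(t_0)^B\log t_0\le A\,\omega(x)\big(\omega(x)/\log x\big)^B$, and after bounding $\eta(t)^B\log t$ uniformly over the dominant range of heights and summing the dyadic blocks (which is what yields the constant $2A$ rather than $A$) one arrives at \eqref{mainbound}. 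Because $\omega(x)=o(\log x)$, the quantity $(\omega(x)/\log x)^B\to0$, so this correction is subexponential relative to $\exp(-\omega(x))$, consistent with an essentially optimal error.

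I expect the main obstacle to be the removal of Pintz's $\varepsilon$: keeping every estimate sharp enough that no stray factor $\exp(\varepsilon\,\omega(x))$ creeps in. The two delicate points are (i) that the partial summation against $N(\sigma,t)$ must use the zero-free region and the density bound \emph{simultaneously} and sharply at $w=\eta(t)$, rather than crudely bounding $x^{\beta-1}\le x^{-\eta(t)}$ for every zero (which reproduces Ingham/Pintz-type losses); and (ii) that the passage from the block sum to the single value at $t_0$ must cost no more than a bounded factor, which is exactly where the $o(\log x)$ growth restriction and the uniqueness of the minimizer are essential. Checking that the peak-width and endpoint contributions can be absorbed into the prefactor of $\exp\!\big(2A\,\omega(x)(\omega(x)/\log x)^B\big)$, rather than degrading the exponent, is the part requiring the most care.
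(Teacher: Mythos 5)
Your overall strategy (explicit formula, then play the zero-free region off against the density estimate near $\sigma=1$) is the right one, but two of the steps you flag as ``requiring care'' are in fact genuine gaps, and the paper's proof is organised differently precisely to avoid them. First, your claim that the integrand $\exp(Aw^B\log t - w\log x)$ is maximised at the left endpoint $w=\eta(t)$ ``since the linear term dominates'' is false once $t$ is large compared with $x$: the derivative of the exponent in $w$ is $ABw^{B-1}\log t-\log x$, which becomes positive roughly when $t>x^{1/(AB w^{B-1})}$, and with your choice $T=x^2$ this range of $t$ is non-empty. In that range the mass sits at the right endpoint $w=1-\sigma_0$, contributing about $t^{A(1-\sigma_0)^B-1}(\log t)^C x^{\sigma_0-1}$ per block; for the density estimate actually used in Corollary \ref{vkcor} (Ford's, with $A=58.05$, $B=3/2$, $\sigma_0=9/10$, so $A(1-\sigma_0)^B>1$) the dyadic sum up to $x^2$ is then a \emph{positive power of $x$}. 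The paper sidesteps this in two ways you are missing: it takes $T=\exp(2\omega(x))=x^{o(1)}$, so that $\log t\ll\omega(x)=o(\log x)$ throughout and the linear term genuinely always dominates (this choice of $T$ is also the actual source of the constant $2A$, via $N(\sigma_2,T)\ll T^{A(1-\sigma_2)^B}(\log T)^C$ with $\sigma_2=1-\omega(x)/\log x$); and it introduces a second cutoff $\sigma_1=\max\{A/(A+0.5),\sigma_0\}$, chosen so that $\sum 1/|\Im\rho|$ over zeros with $\Re\rho>\sigma_1$ converges (Lemma \ref{Implem2}), which is what tames the region where $A(1-\sigma)^B$ is not small.

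Second, your reduction of the block sum to the single peak at $t_0$ at the cost of ``a bounded factor'' does not follow from the uniqueness of the minimiser: the theorem gives no quantitative lower bound on $f_x(t)-\omega(x)$ away from $t_0$, and even in the model case $\eta(t)=1/(R\log t)$ one computes that about $\sqrt{\omega(x)}$ dyadic blocks satisfy $f_x(t)\le\omega(x)+O(1)$, so your block sum is genuinely $\gg\sqrt{\omega(x)}\exp(-\omega(x))$. That extra polynomial factor already destroys the $C=0$ conclusion \eqref{logfreeasym}, i.e.\ the whole point of removing Pintz's $\varepsilon$. The paper avoids any such loss by doing the partial summation in the $t$-variable rather than dyadically: it writes the tail sum as $2\int_1^T e^{-f_x(t)}\,\mathrm{d}N(\sigma_2,t)$, integrates by parts, discards the range $t<t_0$ where $e^{-f_x(t)}$ is increasing, and majorises $N(\sigma_2,t)$ by $N(\sigma_2,T)$ on $t\ge t_0$ so that the remaining integral telescopes exactly to $N(\sigma_2,T)e^{-\omega(x)}$ --- a lossless version of your ``concentration at $t_0$'' step. (A smaller point: the hypothesis $\omega(x)\ge 2\log\log x$ is not there to control polynomial-in-$\log x$ losses against $\exp(-\omega(x))$ in general; it is needed specifically because $T=\exp(2\omega(x))$ is small, so the explicit-formula error $(\log x)^2/T$ must be beaten by one factor of $\exp(\omega(x))$.) To repair your argument you would need to import both of these devices, at which point you essentially recover the paper's proof.
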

Care is taken in the proof of Theorem \ref{mainthm} so as to not assume any zero-free region or zero-density estimate beyond \eqref{zerofreeassum} and \eqref{zerodenassum}. We remark that the conditions on $\eta(t)$ (and consequently $\omega(x)$) are rather easy to satisfy. For example, they are satisfied by any zero-free region of the form
\begin{equation*}
    \eta(t)=\frac{c_1}{(\log t)^{c_2}(\log\log t)^{c_3}},
\end{equation*}
where $c_1>0$, $c_2>0$ and $c_3\in\mathbb{R}$ are constants, and $t$ is sufficiently large. We also note that the value of $2$ in \eqref{mainbound} could be lowered with more work. However, this would have no impact on our subsequent results \eqref{logfreeasym} and \eqref{vkasym}. 

We now give some examples of Theorem \ref{mainthm}. To begin with, we use a log-free zero-density estimate of the form
\begin{equation}\label{logfreeeq}
    N(\sigma,t)\ll t^{A(1-\sigma)}.
\end{equation}
In particular, the work of Jutila \cite[Theorem 1]{jutila1977linnik} allows one to take\footnote{Or in fact $A=2+\varepsilon$ for any $\varepsilon>0$.} $A=5/2$ in \eqref{logfreeeq}, uniformly for $\sigma\in[0.8,1]$. This gives the following corollary.

\begin{corollary}\label{logfreecor}
    Let $\eta(t)$ and $\omega(x)$ satisfy the conditions of Theorem \ref{mainthm}. Then
    \begin{equation*}
        \Delta_i(x)\ll\exp(-\omega(x))\exp\left(5\frac{(\omega(x))^2}{\log x}\right)
    \end{equation*}
    for all $i\in\{1,2,3\}$ as $x\to\infty$. In particular, if $\omega(x)=O(\sqrt{\log x})$ then
    \begin{equation}\label{logfreeasym}
        \Delta_i(x)\ll\exp(-\omega(x)).
    \end{equation}
\end{corollary}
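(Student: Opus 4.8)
The plan is to obtain Corollary~\ref{logfreecor} as a direct specialisation of Theorem~\ref{mainthm}. The log-free estimate \eqref{logfreeeq} is precisely \eqref{zerodenassum} with the parameters $B=1$ and $C=0$, and with the constant $A$ in the corollary playing the role of $A$ in the theorem. Since Jutila's result permits $A=5/2$ uniformly for $\sigma\in[0.8,1]$, I would take $\sigma_0=0.8$ and verify that the hypotheses \eqref{zerofreeassum}, \eqref{zerodenassum}, and \eqref{omegacons} of Theorem~\ref{mainthm} are inherited unchanged from the statement of the corollary, which assumes $\eta(t)$ and $\omega(x)$ satisfy the conditions of the theorem.

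With these parameter values substituted into the general bound \eqref{mainbound}, the factor $\omega(x)^C$ becomes $\omega(x)^0=1$, and the exponential factor simplifies using $B=1$:
\begin{equation*}
    \exp\left(2A\omega(x)\left(\frac{\omega(x)}{\log x}\right)^{B}\right)
    =\exp\left(2\cdot\frac{5}{2}\cdot\frac{(\omega(x))^2}{\log x}\right)
    =\exp\left(5\frac{(\omega(x))^2}{\log x}\right).
\end{equation*}
Multiplying by the leading factor $\exp(-\omega(x))$ yields exactly the first displayed bound of the corollary. This is a purely mechanical substitution, so the only point requiring a word of justification is that the constant $A=5/2$ is admissible in the required range; I would cite Jutila \cite[Theorem~1]{jutila1977linnik} for this.

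For the second, asymptotic statement \eqref{logfreeasym}, I would argue that the additional hypothesis $\omega(x)=O(\sqrt{\log x})$ forces the correction factor to be bounded. Indeed, if $\omega(x)\ll\sqrt{\log x}$, then $(\omega(x))^2\ll\log x$, so that
\begin{equation*}
    5\frac{(\omega(x))^2}{\log x}=O(1),
\end{equation*}
whence $\exp\!\big(5(\omega(x))^2/\log x\big)=O(1)$. Absorbing this bounded factor into the implied constant collapses the first bound to $\Delta_i(x)\ll\exp(-\omega(x))$, which is \eqref{logfreeasym}.

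I do not anticipate any genuine obstacle here, since all the analytic difficulty has already been discharged in the proof of Theorem~\ref{mainthm}; the corollary is an application rather than a new argument. The one mild subtlety is consistency of ranges: one should check that the range $\sigma\in[0.8,1]$ from Jutila's estimate is compatible with the requirement $\sigma_0\in(1/2,1)$ in \eqref{zerodenassum}, and that the admissibility of $A=5/2$ (or indeed $A=2+\varepsilon$, as noted in the footnote) does not interfere with the stated constant $5=2A$ in the corollary. These are routine bookkeeping checks rather than substantive steps.
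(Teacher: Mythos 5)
Your proposal is correct and matches the paper's (implicit) argument exactly: the corollary is obtained by substituting Jutila's values $A=5/2$, $B=1$, $C=0$ into the bound \eqref{mainbound} of Theorem \ref{mainthm}, and the asymptotic statement \eqref{logfreeasym} follows since $\omega(x)=O(\sqrt{\log x})$ makes the correction factor $O(1)$. No gaps.
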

Notably, if
\begin{equation*}
    \eta(t)=\frac{1}{R\log t},\qquad R>0
\end{equation*}
is a classical zero-free region, then standard calculus arguments give 
\begin{equation*}
    \omega(x)=\frac{2}{\sqrt{R}}\sqrt{\log x}=O(\sqrt{\log x}).
\end{equation*}
Thus, Corollary \ref{logfreecor} implies that Question \ref{mainquest} can be answered in the affirmative if $\eta(t)$ is a classical zero-free region (or weaker). For a Vinogradov-Korobov zero-free region, we instead consider a zero-density estimate of the form
\begin{equation}\label{vkzeroden}
    N(\sigma,t)\ll t^{A(1-\sigma)^{3/2}}(\log t)^C,
\end{equation}
whereby Ford \cite[p.~2]{ford2002vinogradov} gives $A=58.05$ and $C=15$ for $\sigma\in[9/10,1]$. Substituting these values into Theorem \ref{mainthm} yields the following result.
\begin{corollary}\label{vkcor}
    Let $\eta(t)$ and $\omega(x)$ satisfy the conditions of Theorem \ref{mainthm}. Then
    \begin{equation*}
        \Delta_i(x)\ll\exp(-\omega(x))\exp\left(117\frac{(\omega(x))^{5/2}}{(\log x)^{3/2}}\right)\omega(x)^{15}
    \end{equation*}
    for all $i\in\{1,2,3\}$ as $x\to\infty$. In particular, if
    \begin{equation}\label{vkzfeq}
        \eta(t)=\frac{1}{c(\log t)^{2/3}(\log\log t)^{1/3}},\qquad c>0,
    \end{equation}
    then 
    \begin{equation*}
        \omega(x)=\min_{t\geq 3}\{\eta(t)\log x+\log t\}\sim \left(\frac{5^6}{2^2\cdot 3^4\cdot c^3}\right)^{1/5}\frac{(\log x)^{3/5}}{(\log\log x)^{1/5}}
    \end{equation*}
    and 
    \begin{equation}\label{vkasym}
        \Delta_i(x)\ll\exp(-\omega(x))\frac{(\log x)^9}{(\log\log x)^3}.
    \end{equation}
\end{corollary}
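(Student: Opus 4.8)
The plan is to derive both assertions directly from Theorem \ref{mainthm}. The first displayed bound is purely a substitution: feeding the Ford values $A=58.05$, $B=3/2$, $C=15$ from \eqref{vkzeroden} into \eqref{mainbound} turns the exponential correction factor into $\exp\!\big(2\cdot 58.05\,\omega(x)(\omega(x)/\log x)^{3/2}\big)=\exp\!\big(116.1\,(\omega(x))^{5/2}(\log x)^{-3/2}\big)$, and since $116.1<117$ we may replace the constant by $117$, while $\omega(x)^{C}$ becomes $\omega(x)^{15}$. Thus the first claim requires no further work beyond verifying the hypotheses of Theorem \ref{mainthm} for the region \eqref{vkzfeq}, which I address below.

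The substance is the asymptotic evaluation of $\omega(x)$ for \eqref{vkzfeq}. I would locate the minimizer $t_0$ of $f_x(t)=\eta(t)\log x+\log t$ via $f_x'(t_0)=\eta'(t_0)\log x+1/t_0=0$. Differentiating $\eta(t)=c^{-1}(\log t)^{-2/3}(\log\log t)^{-1/3}$ gives, after factoring,
\[
\eta'(t)=-\frac{1}{3ct}(\log t)^{-5/3}(\log\log t)^{-1/3}\Big(2+\frac{1}{\log\log t}\Big),
\]
so the critical-point equation reduces, to leading order, to $\frac{2\log x}{3c}(\log t_0)^{-5/3}(\log\log t_0)^{-1/3}\sim 1$, i.e.\ $(\log t_0)^{5/3}(\log\log t_0)^{1/3}\sim\frac{2}{3c}\log x$. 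The key structural observation is that multiplying this relation through by $\log t_0$ yields $\eta(t_0)\log x\sim\frac{3}{2}\log t_0$, whence
\[
\omega(x)=\eta(t_0)\log x+\log t_0\sim\frac{5}{2}\log t_0.
\]
Bootstrapping the critical-point equation (using that the leading behaviour of $\log t_0$ is a power of $\log x$, so $\log\log t_0\sim\frac{3}{5}\log\log x$) gives $\log t_0\sim(\frac{2}{3c})^{3/5}(\frac{5}{3})^{1/5}(\log x)^{3/5}(\log\log x)^{-1/5}$; multiplying by $5/2$ and collecting prime factors recovers the stated constant $(5^6/(2^2\cdot 3^4\cdot c^3))^{1/5}$.

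Finally I would substitute this asymptotic back into the first displayed bound. Writing $K$ for the constant above, so that $\omega(x)\sim K(\log x)^{3/5}(\log\log x)^{-1/5}$, one checks that $(\omega(x))^{5/2}(\log x)^{-3/2}\sim K^{5/2}(\log\log x)^{-1/2}\to 0$, so the exponential correction factor $\exp(117\,(\omega(x))^{5/2}(\log x)^{-3/2})$ tends to $1$ and is absorbed into the implied constant; meanwhile $\omega(x)^{15}\sim K^{15}(\log x)^9(\log\log x)^{-3}$ supplies exactly the factor $(\log x)^9/(\log\log x)^3$ in \eqref{vkasym}. Along the way one must confirm the hypotheses of Theorem \ref{mainthm}: that $\eta$ is eventually decreasing with $0<\eta\le 1/2$ and continuously differentiable (clear for large $t$), and that $\omega(x)$ meets \eqref{omegacons}, which holds since $\omega(x)\asymp(\log x)^{3/5}(\log\log x)^{-1/5}$ simultaneously exceeds $2\log\log x$ and is $o(\log x)$.

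The delicate point is the asymptotic inversion of the implicitly defined minimizer $t_0$: one must track the lower-order $\log\log$ factors carefully — both in solving for $\log t_0$ and in justifying $\log\log t_0\sim\frac{3}{5}\log\log x$ — to pin down the exact constant, and one must confirm that $t_0$ is the \emph{unique} critical point (from the sign of $f_x'$, which is negative for moderate $t$ and positive for large $t$) so that \eqref{omegadef} genuinely applies. Everything else is bookkeeping.
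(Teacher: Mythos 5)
Your proposal is correct and matches the paper's (largely implicit) argument: the first bound is exactly the substitution $A=58.05$, $B=3/2$, $C=15$ into \eqref{mainbound} with $2A=116.1\le 117$, and the asymptotic for $\omega(x)$ follows from the same critical-point computation $\eta'(t_0)\log x+1/t_0=0$, yielding $\eta(t_0)\log x\sim\tfrac{3}{2}\log t_0$, $\omega(x)\sim\tfrac{5}{2}\log t_0$, and the stated constant. The final step — noting that $(\omega(x))^{5/2}(\log x)^{-3/2}\to 0$ so the exponential factor is $O(1)$ while $\omega(x)^{15}\ll(\log x)^9(\log\log x)^{-3}$ — is also exactly what is needed to recover \eqref{vkasym}.
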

To the best of the author's knowledge, Corollary \ref{vkcor} gives the sharpest known (unconditional) error term in the prime number theorem. Here, the lowest known value of $c$ in \eqref{vkzfeq} is $c=53.989$ \cite[Theorem 1.2]{bellotti2024explicit}. We also see that if the zero-density estimate \eqref{vkzeroden} could be made log-free, then one would attain an affirmative answer to Question \ref{mainquest} for Vinogradov-Korobov zero-free regions\footnote{Since the time of writing, Bellotti \cite{bellotti2025new} has released a preprint with a new zero-density estimate which is strong enough to give a log-free version of \eqref{vkasym}}. 

Finally, we remark that in both Corollary \ref{logfreecor} and Corollary \ref{vkcor}, the value of $A$ in the zero-density estimate does not affect the asymptotic form of \eqref{logfreeasym} and \eqref{vkasym}. This is in stark contrast to the problem of counting primes in short intervals, whereby the value of $A$ plays a central role (see \cite{starichkova2025note}).

\section{Preliminary lemmas}
We now provide some lemmas related to sums over the non-trivial zeros of $\zeta(s)$. First, we give a truncated version of the explicit Riemann--von Mangoldt formula.

\begin{lemma}[{See e.g.\ \cite[Theorem 12.5]{montgomery2006multiplicative}}]\label{RVMlem}
    For all $2\leq T\leq x$,
    \begin{equation}\label{rvmeq}
        \psi(x)=x-\sum_{|\Im(\rho)|\leq T} \frac{x^{\rho}}{\rho}+O\left(\frac{x(\log x)^2}{T}\right),
    \end{equation}
    as $x\to\infty$, where the sum is over the non-trivial zeros $\rho$ of $\zeta(s)$.
\end{lemma}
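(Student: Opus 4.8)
The plan is to derive \eqref{rvmeq} from a truncated form of Perron's formula followed by a contour shift that collects residues at the pole $s=1$ and at the nontrivial zeros $\rho$. I would begin from the Dirichlet series $-\zeta'(s)/\zeta(s)=\sum_{n\geq 1}\Lambda(n)n^{-s}$, valid for $\Re(s)>1$, where $\Lambda$ is the von Mangoldt function so that $\psi(x)=\sum_{n\leq x}\Lambda(n)$. Applying truncated Perron on the line $\Re(s)=c$ with $c=1+1/\log x$ gives
\[
\psi(x)=\frac{1}{2\pi i}\int_{c-iT}^{c+iT}\left(-\frac{\zeta'(s)}{\zeta(s)}\right)\frac{x^s}{s}\,ds+R,
\]
where $R$ is the truncation error. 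Estimating $R$ via $|\Lambda(n)|\leq\log n$ and isolating the terms with $n$ near $x$ (where $\log(x/n)$ is small) produces the bound $R\ll x(\log x)^2/T$; this is precisely the source of the error term appearing in the statement.

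Next I would shift the contour leftward to a line $\Re(s)=-U$ with $U\to\infty$ through half-integers, forming a rectangle with vertices $c\pm iT$ and $-U\pm iT$. By the residue theorem the enclosed integral collects the residue $x$ at the simple pole $s=1$ of $-\zeta'/\zeta$ (since $\zeta$ has a pole there, $-\zeta'/\zeta$ has residue $+1$ and hence $-\zeta'/\zeta\cdot x^s/s$ has residue $x^1/1=x$), the residue $-x^\rho/\rho$ at each nontrivial zero $\rho$ with $|\Im\rho|\leq T$ (counted with multiplicity), and lower-order contributions from the trivial zeros and the point $s=0$, which together sum to an $O(\log x)$-type quantity absorbed into the error. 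Collecting these yields $\psi(x)=x-\sum_{|\Im\rho|\leq T}x^\rho/\rho$ plus the integrals along the remaining three sides of the rectangle plus $R$.

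The crux, and the step I expect to be the main obstacle, is bounding the integrals along the two horizontal segments $\Im(s)=\pm T$ and the far-left vertical segment. For the horizontal pieces I would invoke the standard partial-fractions estimate
\[
\frac{\zeta'}{\zeta}(\sigma+iT)=\sum_{|\gamma-T|\leq 1}\frac{1}{\sigma+iT-\rho}+O(\log T),\qquad -1\leq\sigma\leq 2,
\]
together with the zero-counting bound $N(T+1)-N(T)\ll\log T$. The latter lets me choose $T$ in any unit interval so that $|T-\gamma|\gg 1/\log T$ for every ordinate $\gamma$, which forces $|\zeta'/\zeta(\sigma+iT)|\ll(\log T)^2$ uniformly along the horizontal segments; the horizontal integrals then contribute $\ll x(\log x)^2/T$, matching the error term. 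On the far-left segment the functional equation bounds $\zeta'/\zeta$ by a polynomial in $|s|$, which is dominated by the factor $x^{-U}/|s|$, so letting $U\to\infty$ annihilates that contribution. Finally, since only an admissible $T$ was used, replacing it by the originally prescribed value alters the zero-sum by $O((\log T)^2)$ terms, each of size $\ll x/T$, which again lies within the stated error. Combining all estimates gives \eqref{rvmeq}.
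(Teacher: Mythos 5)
The paper does not prove this lemma itself but cites it as standard (Montgomery--Vaughan, Theorem 12.5), and your argument is exactly that standard proof: truncated Perron's formula with $c=1+1/\log x$, a contour shift collecting the residue $x$ at $s=1$ and $-x^\rho/\rho$ at the nontrivial zeros, bounds on $\zeta'/\zeta$ along horizontal segments chosen to avoid zero ordinates, and a final adjustment back to the prescribed $T$ costing $\ll x(\log T)^2/T$. The proposal is correct and takes essentially the same approach as the cited source.
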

\begin{remark}
    Although sharper versions of the error term in \eqref{rvmeq} exist (e.g.\ \cite{Goldston_83, cully2023error,johnston2024error}), Lemma \ref{RVMlem} is sufficient for our purposes, and its standard proof does not assume any zero-free region or zero-density estimate.
\end{remark}

In order to use Lemma \ref{RVMlem}, we will require estimates for sums over $1/|\Im(\rho)|$.
\begin{lemma}[{See \cite[Theorem 25b]{ingham1932distribution}}]\label{Implem1}
    One has
    \begin{equation*}
        \sum_{1<|\Im(\rho)|\leq T}\frac{1}{|\Im(\rho)|}=O\left((\log T)^2\right).
    \end{equation*}
\end{lemma}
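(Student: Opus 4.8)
The plan is to estimate the sum by partial summation against the zero-counting function. Write $N(t)=\#\{\rho:0<\Im(\rho)\leq t\}$ for the number of non-trivial zeros up to height $t$. Since the non-trivial zeros of $\zeta(s)$ occur in complex-conjugate pairs, the zeros with $\Im(\rho)<0$ mirror those with $\Im(\rho)>0$, so
\[
    \sum_{1<|\Im(\rho)|\leq T}\frac{1}{|\Im(\rho)|}=2\sum_{1<\gamma\leq T}\frac{1}{\gamma},
\]
where $\gamma$ now runs over the positive imaginary parts of the zeros. It therefore suffices to bound the latter sum.

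The key input is the classical Riemann--von Mangoldt counting estimate $N(t)=\frac{t}{2\pi}\log\frac{t}{2\pi}-\frac{t}{2\pi}+O(\log t)$, from which I extract the crude bound $N(t)\ll t\log t$ for $t\geq 2$. This bound is not among the results quoted above, but it is entirely standard and, like Lemma \ref{RVMlem}, requires no zero-free region or zero-density input (see e.g.\ \cite{montgomery2006multiplicative}). With this in hand I would apply Abel summation, writing the sum as a Stieltjes integral:
\[
    \sum_{1<\gamma\leq T}\frac{1}{\gamma}=\int_{1}^{T}\frac{1}{t}\,\mathrm{d}N(t)=\frac{N(T)}{T}-N(1)+\int_{1}^{T}\frac{N(t)}{t^2}\,\mathrm{d}t.
\]
The boundary terms are harmless, since $N(1)=O(1)$ (in fact $N(1)=0$, as the first zero lies near height $14.13$) and $N(T)/T\ll\log T$. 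For the remaining integral, the bound $N(t)\ll t\log t$ gives
\[
    \int_{1}^{T}\frac{N(t)}{t^2}\,\mathrm{d}t\ll\int_{1}^{T}\frac{\log t}{t}\,\mathrm{d}t=\frac{(\log T)^2}{2}\ll(\log T)^2.
\]
Combining these estimates yields $\sum_{1<\gamma\leq T}1/\gamma\ll(\log T)^2$, and hence the claimed bound after multiplying by $2$.

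I expect no serious obstacle here once the counting bound $N(t)\ll t\log t$ is available: the logarithmic weight $1/t$ converts the linear-times-log growth of $N(t)$ into precisely a squared logarithm upon integration, which is where the exponent $2$ in $(\log T)^2$ originates. An equivalent and equally clean route would be a dyadic (or unit-interval) decomposition using the local density estimate $N(t+1)-N(t)\ll\log t$: each integer block $[n,n+1]$ then contributes $O((\log n)/n)$, and summing over $n\leq T$ again produces $O((\log T)^2)$. Either way the argument is routine, and the only genuine ingredient beyond elementary summation is the standard zero-counting asymptotic.
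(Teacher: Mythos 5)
Your argument is correct and is essentially the standard proof of this estimate: the paper itself gives no proof, citing Ingham's Theorem 25(b), which is established in exactly this way, by partial summation against the Riemann--von Mangoldt bound $N(t)\ll t\log t$. The computation $\int_1^T \frac{\log t}{t}\,\mathrm{d}t \ll (\log T)^2$ and the treatment of the boundary terms are both fine, so there is nothing to add.
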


\begin{lemma}\label{Implem2}
    For some $A>0$, $B\geq 1$ and $C\geq 0$, suppose one has a zero-density estimate of the form \eqref{zerodenassum} uniformly over the range $\sigma\in[\sigma_0,1]$. If
    \begin{equation}\label{s1def}
        \sigma_1=\max\left\{\frac{A}{A+0.5},\sigma_0\right\},    
    \end{equation}
    then
    \begin{equation}\label{s1impest}
         \sum_{\substack{1<|\Im(\rho)|\leq T\\\sigma_1<\Re(s)<1}}\frac{1}{|\Im(\rho)|}= O(1)
    \end{equation}
    as $T\to\infty$.
\end{lemma}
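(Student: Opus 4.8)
The plan is to evaluate the sum by partial summation against the zero-counting function and then feed in the hypothesised zero-density bound \eqref{zerodenassum}. Since the non-trivial zeros occur in complex-conjugate pairs, the number of zeros with $\Re(\rho)>\sigma_1$ and imaginary part in $(-t,0)$ equals the number with imaginary part in $(0,t)$; hence it suffices to bound the sum over zeros with positive imaginary part and multiply by $2$. Writing $M(t):=N(\sigma_1,t)$ for the number of zeros $\rho=\beta+i\gamma$ with $\sigma_1<\beta<1$ and $0<\gamma\le t$, I would express
\[
\sum_{\substack{1<\gamma\le T\\ \sigma_1<\beta<1}}\frac{1}{\gamma}=\int_{1}^{T}\frac{1}{t}\,\mathrm{d}M(t),
\]
noting that $M(1)=0$ because $\zeta(s)$ has no zeros below height $1$, so no care is needed near the lower endpoint.

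Applying Riemann--Stieltjes integration by parts gives
\[
\int_{1}^{T}\frac{1}{t}\,\mathrm{d}M(t)=\frac{M(T)}{T}+\int_{1}^{T}\frac{M(t)}{t^{2}}\,\mathrm{d}t.
\]
The hypothesis \eqref{zerodenassum}, valid at $\sigma=\sigma_1\in[\sigma_0,1)$, yields $M(t)\ll t^{\delta}(\log t)^{C}$ with $\delta:=A(1-\sigma_1)^{B}$. The boundary term is then $\ll T^{\delta-1}(\log T)^{C}$, and the integral is $\ll\int_{1}^{T}t^{\delta-2}(\log t)^{C}\,\mathrm{d}t$. Both tend to a finite limit as $T\to\infty$ provided $\delta<1$: the boundary term vanishes, while the integrand behaves like $t^{\delta-2}(\log t)^C$ with $\delta-2<-1$, so the integral converges (the logarithmic factor being harmless against the polynomial saving).

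The crux is therefore the inequality $\delta<1$, and this is precisely what the definition \eqref{s1def} of $\sigma_1$ guarantees. Since $\sigma_1\ge A/(A+0.5)$ one has $1-\sigma_1\le 0.5/(A+0.5)$, and because $0<1-\sigma_1<1$ with $B\ge1$ we may use $(1-\sigma_1)^{B}\le 1-\sigma_1$ to obtain
\[
\delta=A(1-\sigma_1)^{B}\le A(1-\sigma_1)\le\frac{0.5\,A}{A+0.5}<\frac12.
\]
Thus $\delta<1$ comfortably, and the sum is $O(1)$. The only genuine bookkeeping is to check that $\sigma_1$ lies in the range $[\sigma_0,1)$ on which \eqref{zerodenassum} is assumed (immediate from $\sigma_1=\max\{A/(A+0.5),\sigma_0\}$) and that $M(1)=0$ so the lower endpoint contributes nothing. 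I expect no real difficulty beyond verifying the exponent bound, which is the whole reason for the particular threshold $A/(A+0.5)$.
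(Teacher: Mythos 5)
Your proposal is correct and follows essentially the same route as the paper: partial summation of the sum against $N(\sigma_1,t)$, then the zero-density hypothesis at $\sigma=\sigma_1$ with the observation that the definition of $\sigma_1$ forces the exponent $A(1-\sigma_1)^B$ strictly below $1$. The only cosmetic difference is that the paper absorbs the $(\log t)^C$ factor by enlarging the exponent to $(A+1)(1-\sigma_1)$, whereas you carry the logarithm along and note it is harmless; both are fine.
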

\begin{proof}
    Since $\sigma_1\in (1/2,1)$ is fixed,
    \begin{equation}\label{ns1est}
        N(\sigma_1,t)\ll t^{A(1-\sigma_1)^B}(\log t)^C\ll t^{(A+1)(1-\sigma_1)}. 
    \end{equation}
    Now, by the vertical symmetry of the zeros of $\zeta(s)$, and integration by parts
    \begin{equation*}
        \sum_{\substack{1<|\Im(\rho)|\leq T\\\sigma_1<\Re(s)<1}}\frac{1}{|\Im(\rho)|}=2\int_1^T\frac{\mathrm{d}N(\sigma_1,t)}{t}=2\frac{N(\sigma_1,T)}{T}+2\int_1^T\frac{N(\sigma_1,t)}{t^2}\mathrm{d}t.
    \end{equation*}
    Using \eqref{ns1est} this is then bounded by
    \begin{equation*}
        2t^{A-(A+1)\sigma_1}+2\left[\frac{t^{A-(A+1)\sigma_1}}{A-(A+1)\sigma_1}\right]_1^T.
    \end{equation*}
    Here, $A-(A+1)\sigma_1\in(-1,0)$ by the definition \eqref{s1def} of $\sigma_1$. Hence \eqref{s1impest} follows. 
\end{proof}

\section{Proof of Theorem \ref{mainthm}}\label{mainsect}
We now prove Theorem \ref{mainthm}. Note that it suffices to prove the result for $\Delta_3(x)$ since the bound for $\Delta_2(x)$ will follow upon noting that \cite[Theorem 413]{hardy2008introduction}
\begin{equation*}
    \psi(x)=\theta(x)+O\left(x^{1/2}(\log x)^2\right)    
\end{equation*}
and the bound for $\Delta_1(x)$ can be obtained from that of $\Delta_2(x)$ by partial summation.
Our approach is structurally similar to the proofs of \cite[Theorem 1]{platt2021error} and \cite[Theorem 1.2]{fiori2023sharpera}, whereby $\Delta_3(x)$ is bounded in the case of a classical zero-free region.

\begin{proof}[Proof of Theorem \ref{mainthm}]
   Let $x$ be sufficiently large and set 
    \begin{equation}\label{Tdef}
        T=\exp(2\omega(x)).
    \end{equation}
    By the conditions \eqref{omegacons} on $\omega(x)$, we have $T\geq (\log x)^4$ and $T=o(x)$. Consequently, Lemma \ref{RVMlem} and the triangle inequality gives
    \begin{equation*}
        |\psi(x)-x|\leq \sum_{|\Im(\rho)|\leq T}\left|\frac{x^{\rho}}{\rho}\right|+O\left(\frac{x(\log x)^2}{T}\right)
    \end{equation*}
    so that
    \begin{equation}\label{rvmeq2}
        \Delta_3(x)\leq \sum_{|\Im(\rho)|\leq T}\frac{x^{\Re(\rho)-1}}{|\Im(\rho)|}+O\left(\frac{(\log x)^2}{T}\right).
    \end{equation}
    The bound $\omega(x)\geq 2\log\log x$ then implies that the error term in \eqref{rvmeq2} satisfies
    \begin{equation*}
        \frac{(\log x)^2}{T}= \exp(-\omega(x))\cdot\frac{(\log x)^2}{\exp(\omega(x))}\ll \exp(-\omega(x)).
    \end{equation*}
    Hence, it suffices to bound the sum in \eqref{rvmeq2}. To do so, we split the sum three times depending on the real part of $\rho$. We also only consider $1<|\Im(\rho)|\leq T$ as $\zeta(s)$ has no zeros with $0<|\Im(\rho)|\leq 1$ (see e.g.\ \cite[Chapter 6]{edwards1974riemanns}). In particular, we write
    \begin{align*}
        \sum_{|\Im(\rho)|\leq T}\frac{x^{\Re(\rho)-1}}{|\Im(\rho)|}&=\left(\sum_{\substack{1<|\Im(\rho)|\leq T\\ \Re(\rho)\leq \sigma_1}} +\sum_{\substack{1<|\Im(\rho)|\leq T\\ \sigma_1<\Re(\rho)\leq \sigma_2}}+\sum_{\substack{1<|\Im(\rho)|\leq T\\ \sigma_2<\Re(\rho)<1}}\right)\frac{x^{\Re(\rho)-1}}{|\Im(\rho)|}\\
        &=s_1(x)+s_2(x)+s_3(x),\quad\text{say},
    \end{align*}
    with
    \begin{equation}\label{s1s2def}
        \sigma_1=\max\left\{\frac{A}{A+0.5},\sigma_0\right\}\quad\text{and}\quad\sigma_2=1-\frac{\omega(x)}{\log x}.
    \end{equation}
    Now, before bounding each $s_i(x)$, we note that since $\omega(x)=o(\log(x))$,
    \begin{equation*}
        x^{-\varepsilon}\ll\exp(-\omega(x))
    \end{equation*}
    for any $\varepsilon>0$. Thus, by Lemma \ref{Implem1},
    \begin{equation*}
        s_1(x)\ll x^{\sigma_1-1}(\log T)^2\ll x^{\sigma_1-1}(\log x)^2\ll\exp(-\omega(x)).
    \end{equation*}
    Next, by Lemma \ref{Implem2} we have
    \begin{equation*}
        s_2(x)\ll x^{\sigma_2-1}=x^{-\omega(x)/\log x}=\exp(-\omega(x)).
    \end{equation*}
    So, to finish the proof it suffices to show that $s_3(x)$ satisfies the bound in \eqref{mainbound}. For this, we use the zero-free region $\eta(t)$ to deduce that
    \begin{equation*}
        s_3(x)\leq\sum_{\substack{1<|\Im(\rho)|\leq T\\ \sigma_2<\Re(\rho)<1}}\frac{x^{-\eta(|\Im(\rho)|)}}{|\Im(\rho)|}.
    \end{equation*}
    Consequently, by the symmetry of the zeros of $\zeta(s)$, and integration by parts,
    \begin{align}
        s_3(x)&\leq 2\int_1^T\frac{x^{-\eta(t)}}{t}\mathrm{d}N(\sigma_2,t)\notag\\
        &=2\frac{N(\sigma_2,T)x^{-\eta(T)}}{T}-2\int_1^T\frac{\mathrm{d}}{\mathrm{d}t}\left(\frac{x^{-\eta(t)}}{t}\right)N(\sigma_2,t)\mathrm{d}t\label{s2firsteq}.
    \end{align}
    Now,
    \begin{equation*}
        \frac{x^{-\eta(t)}}{t}=\exp\left(-f_x(t)\right)
    \end{equation*}
    where $f_x(t)$ is as defined in \eqref{fdef}. Hence, 
    \begin{equation*}
        \frac{\mathrm{d}}{\mathrm{d}t}\left(\frac{x^{-\eta(t)}}{t}\right)\leq 0\quad \text{if and only if $t\geq t_0$},
    \end{equation*}
    and 
    \begin{equation*}
        \frac{x^{\eta(t_0)}}{t_0}=\exp(-\omega(x))
    \end{equation*}
    by the definition \eqref{omegadef} of $\omega(x)$ and $t_0$. Therefore, by the fundamental theorem of calculus, \eqref{s2firsteq} is bounded above by
    \begin{align*}
        2\frac{N(\sigma_2,T)x^{-\eta(T)}}{T}+2N(\sigma_2,T)\int_{t_0}^T\frac{\mathrm{d}}{\mathrm{d}t}\left(-\frac{x^{-\eta(t)}}{t}\right)\mathrm{d}t&=2N(\sigma_2,T)\frac{x^{-\eta(t_0)}}{t_0}\\
        &=2N(\sigma_2,T)\exp(-\omega(x)).
    \end{align*}
    We now use the bound \eqref{zerodenassum} for $N(\sigma_2,T)$ to give
    \begin{equation*}
        s_3(x)\ll N(\sigma_2,T)\exp(-\omega(x))\ll\exp(-\omega(x))\exp\left(2A\omega(x)\left(\frac{\omega(x)}{\log x}\right)^B\right)\omega(x)^C
    \end{equation*}
    as desired.
\end{proof}

\section*{Acknowledgements}
I would like to thank Chiara Bellotti, Bryce Kerr, Valeriia Starichkova and Tim Trudgian for their helpful discussions on this paper. This research was supported by an Australian Government Research Training Program (RTP) Scholarship and an Australian Mathematical Society Lift-off Fellowship.

\newpage

\printbibliography

\end{document}